\newtheorem{theorem}{Theorem}[section]
\newtheorem{lemma}[theorem]{Lemma}
\newtheorem{proposition}[theorem]{Proposition}
\newtheorem{corollary}[theorem]{Corollary}
\theoremstyle{definition}
\newtheorem{definition}[theorem]{Definition}
\newtheorem{remark}[theorem]{Remark}
\numberwithin{equation}{section}
\let\al=\alpha
\let\b=\beta
\let\g=\gamma
\let\La=\Lambda
\let\Om=\Omega
\def\bbR{\mathbb{R}}
\def\bbS{\mathbb{S}}
\newcommand{\be}{\begin{equation*}}
\newcommand{\ee}{\end{equation*}}
\newcommand{\ben}{\begin{equation}}
\newcommand{\een}{\end{equation}}
\newcommand{\bn}{\begin{enumerate}}
\newcommand{\en}{\end{enumerate}}
\def\tA{\widetilde{A}(y)}
\def\dA{\det{A(y)}}
\def\bmo{{{\rm BMO}(\mathbb R^n)}}
\def\wk1{W^{k,1}(\mathbb{R}^n)}
\begin{document}
\title[Hausdorff operators on Sobolev space]{Hausdorff operators on Sobolev space $W^{k,1}$}
\author{GUOPING ZHAO}
\address{School of Applied Mathematics, Xiamen University of Technology, Xiamen, 361024, P.R.China} \email{guopingzhaomath@gmail.com}
\author{WEICHAO GUO}
\address{School of Mathematics and Information Sciences, Guangzhou University, Guangzhou, 510006, P.R.China}
\email{weichaoguomath@gmail.com}
\subjclass[2010]{46E35, 47G10, 42B35.}
\keywords{Hausdorff operator, sharp conditions, Sobolev space.}

\begin{abstract}
  This paper is served as a first contribution regarding the boundedness of Hausdorff operators on function spaces with smoothness.
  The sharp conditions are established for boundedness of Hausdorff operators on Sobolev spaces $W^{k,1}$.
  As applications, some bounded and unbounded properties of Hardy operator and adjoint Hardy operator on $W^{k,1}$ are deduced.
\end{abstract}

\maketitle

\section{Introduction and Preliminary}
Sobolev spaces is one of the most important function spaces in the fields of Harmonic analysis and partial differential equations.
For an integer $k\geq 0$, and $1\leq p <\infty$,
the classical Sobolev space $W^{k,p}(\mathbb{R}^n)$ is defined as the space of functions $f$,
with $f\in L^p(\mathbb{R}^n)$ and all derivatives, denoted by $D^{\al}f$, exist in the weak sense and
belong to $L^p(\mathbb{R}^n)$ for all $|\alpha|\leq k$.
The corresponding norm for the function space $W^{k,p}:= W^{k,p}(\mathbb{R}^n)$ is defined by
\begin{equation*}
  \|f\|_{W^{k,p}}:=\sum_{|\alpha|\leq k} \left\| D^{\al}f \right\|_{L^p},
  \left(D^{0}f=f\right).
\end{equation*}
With the norm defined above, $W^{k,p}$ is a Banach space.
One can see \cite{Stein_book_singular-integral} for a nice description of the basic definitions and properties about Sobolev spaces.

As we know, in the field of harmonic analysis and PDE, it is quite important to study whether the regularity of a function (or the initial data)
can be persisted through certain operator, for instance, the boundedness of certain operators on Sobolev spaces.
One can see the celebrated Sobolev embedding theorem in \cite{Stein_book_singular-integral},
see \cite{CMP_JFA_2017, Liu-Chen-Wu_Bulletin Australian_2016} for the boundedness of Hardy-Littlewood maximal operator on Sobolev spaces,
and see \cite{Brezis_book} for the study of evolution equations on Sobolev spaces.

Note that the previous works were mainly concerned with convolution operators, it is of great interest to
consider how the regularity can be transferred through certain operators of non convolution type.
Here, we introduce such a class of operators, named Hausdorff operators.
For a suitable function $\Phi$, one of the corresponding Hausdorff operators $H_{\Phi}$ is defined by
\begin{equation}
  H_{\Phi}f(x):=\int_{\mathbb{R}^n}\Phi(y)f\left(\frac{x}{|y|}\right)\mathrm{d}y,
\end{equation}
where the above integral makes sense for $f$ belongs to some classes of nice functions. Obviously, $H_{\Phi}$
is not a convolution operator, and does not commute with translations.

The study of Hausdorff operators, which originated from some classical summation methods, has a long history in real and complex analysis.
The interested reader can refer to
\cite{Chen_Fan_Wang_2013} and \cite{Liflyand_survey} for a survey of some historical background and recent developments
regarding Hausdorff operators.
Particularly, Hausdorff operator is an interesting operator in harmonic analysis.
It contains some important operators when $\Phi$ is taken suitably,
such as Hardy operator, adjoint Hardy operator (see \cite{Chen_Fan_Li_2012, Chen_Fan_Zhang_2012, Fan_Lin_Analysis_2014}),
and the Ces\`{a}ro operator \cite{Miyachi_2004, Siskakis} in one dimension.
The Hardy-Littlewood-P\'{o}lya operator and the Riemann-Liouville fractional integral can also be derived from the Hausdorff operator.

In recent years, there is an increasing interest on the
the boundedness of Hausdorff operators on function spaces,
see for example \cite{Gao-Wu-Guo_Math.Inequal.Appl_2015,Gao-Zhong_Math.Inequal.Appl_2014,Liflyand-Miyachi_Studia,Liflyand-Moricz_Proceeding}).
However, the boundedness of Hausdorff operator can be characterized in only few cases.
We refer the reader to \cite{Gao-Zhao_Anal.Math_2015, Wu_Chen_SCIChina_2014} for the characterization
of the bounded Hausdorff operators on Lebesgue spaces,
to \cite{Fan_Lin_Analysis_2014, Ruan_Fan_JMAA_2016} for the characterization of the bounded Hausdorff operators on
Hardy spaces $H^1$ and $h^1$, and to \cite{Zhao_Guo_AFA} for the characterization of the bounded Hausdorff operators on modulation and Wiener amalgam spaces. Until now, there is no result regarding the boundedness of Hausdorff operators on Sobolev spaces.
One of our motivations is to serve as a first contribution for this.

In this paper, we consider the $W^{k,1} (k\geq 1)$ boundedness of
generalized Hausdorff operators $H_{\Phi,A}$ defined by
\begin{equation*}
  H_{\Phi,A}f(x):=\int_{\mathbb{R}^n}\Phi(y)f(A(y)x)\mathrm{d}y,
\end{equation*}
where $A:=\left(a_{ij}(y)\right)_{n\times n}$ is a matrix depending on the variable $y$.
The operator $H_{\Phi,A}$ was first studied by M\'{o}ricz \cite{Moricz2005AnalMath} and Lerner-Liflyand \cite{LernerLiflyand2007jaums},
and it is easy to see that $H_{\Phi,A}=H_{\Phi}$ when $A(y)=\text{diag}\{1/|y|,\cdots,1/|y|\}$.

For $x=(x_1,x_2,\cdots,x_n)\in\bbR^n$, $|x|:=\left(\sum\limits_{i=1}^n x_i^2\right)^{1/2}$,
and for a multi-index $\alpha=(\al_1,\al_2,\cdots,\al_n)$ with $\al_i\in\mathbb{N}$, $|\al|:=\sum\limits_{i=1}^n \al_i$.
For a matrix $B=(b_{ij})_{m\times n}$, we set the norm by
\begin{equation*}
  \|B\|:=\left(\sum_{i=1}^m\sum_{j=1}^n|b_{ij}|^2\right)^{1/2}.
\end{equation*}
Note that this norm is equivalent to the operator norm defined by
\be
\|B\|_{op}:=\sup\limits_{x\in \bbR^n, |x|\neq 0}\frac{|Bx|}{|x|}.
\ee
Denote by $\det B$ the determination of the matrix $B=(b_{ij})_{m\times n}$ with $m=n$.
Set
$$\mathscr{A}:
=\{
 A(y)|
 A(y)=\Lambda P(y) Q,
 \text{ where }det Q, det \Lambda \neq 0, P(y)\in\mathscr{P}\}
\},
$$
where
$$\mathscr{P}:=\{
 B(y)=(b_{ij}(y))_{n\times n} | b_{ij}(y)\geq0 \text{ uniformly on }  \mathrm{supp}\,\Phi\}.$$
Write the matrix into the form of  column vectors:
$$A(y)=\{\vec{A}_1(y), \vec{A}_2(y), \cdots, \vec{A}_n(y)\}.$$

Comparing with the convolution operator such as $T_{\Phi}f(x):=(\Phi\ast f)(x)=\int_{\bbR^n}\Phi(y)f(x-y)dy$,
the translation operator acted on $f$ is now replaced by a dilation operator in $H_{\Phi,A}$ (see also $H_{\Phi}$).
Note that the derivation operation commutes with translation, but does not commute with dilation.
Thus, in order to establish a bounded result on Sobolev space for $H_{\Phi,A}$,
the conditions on $\Phi$ should be related to the order of smoothness of Sobolev space.
This is one of the main differences between convolution and non convolution operators.

More difficulties come from the lower bound estimates when we establish
the necessity conditions for the Sobolev boundedness of $H_{\Phi,A}$.
To be more specific, we have following two difficulties:
\bn
\item As mentioned before, the conditions on $\Phi$ is related to the smoothness of Sobolev space, see (\ref{condition of main theorem, matrix}).
The type of conditions seems to appear when the derivative operation can be transferred from $H_{\Phi,A}f$ to $f$, see (\ref{derivative of Hausdorff operator in main theorem, matrix}).
However, this is impossible before we know $\Phi$ satisfies the desired conditions.
\item
When dealing with the necessity part, we must choose a suitable $f$ and estimate $\|H_{\Phi,A}f\|_{W^{k,1}}$ from below
by some integral involving $\Phi$.
This is not difficult when $k=0$, i.e., the Lebesgue case, in which we can choose a nonnegative $f$ to get the desired estimates.
However, when $k\geq 1$, the Sobolev norm $\|H_{\Phi,A}f\|_{W^{k,1}}$ in fact contains
some integral terms involving $\Phi$ and $\frac{\partial^{\al}f}{\partial x^{\al}}$ for $|\al|\leq k$.
In order to establish the lower estimates of these terms, $\frac{\partial^{\al}f}{\partial x^{\al}}$ is expected to be nonzero and nonnegative.
However, a function like this does not belong to any Sobolev space.
\en

Now, we state our main results as follows.

\begin{theorem}\label{theorem, boundedness, matrix}
  Suppose that the Hausdorff operator $H_{\Phi,A}$ satisfyies
  \begin{equation}\label{condition of main theorem, matrix}
    \int_{\mathbb{R}^n}\left|\det A(y)\right|^{-1} \left(1+\|A(y)\|^{k} \right) \Phi(y)\mathrm{d}y<\infty
  \end{equation}
  for some integer $k\geq 0$.
  Then, $H_{\Phi}$ is bounded on Sobolev space $W^{k,1}$.
  Moreover, if (\ref{condition of main theorem, matrix}) holds, for $f\in W^{k,1}$ we have
  \begin{align}\label{derivative of Hausdorff operator in main theorem, matrix}
      &D^{\al}(H_{\Phi, A}f)(x)
      =\int_{\bbR^n}\Phi(y)D^{\al}\big(f(A(y)x)\big)\mathrm{d}x
      \nonumber
      \\
      &=\int_{\mathbb{R}^n}\Phi(y)\cdot
      \left[\prod_{j=1}^n\left(\sum_{i=1}^na_{ij}\frac{\partial}{\partial x_i}\right)^{\al_j}\right](f)(A(y)x)
        \mathrm{d}y
    \end{align}
  for all multi-index $\alpha=\{\alpha_1, \alpha_2,\cdots,\alpha_n\}$ with $|\alpha|\leq k$.
  Especially,
  \begin{equation}
    \nabla H_{\Phi,A}f(x)
    =\int_{\mathbb{R}^n}\Phi(y)\nabla f\left(A(y)x\right)\cdot A(y)\mathrm{d}y
    \ \text{ for}\ f\in W^{1,1},
  \end{equation}
  \begin{equation}
    \left(\partial_{ij} H_{\Phi,A}f(x)\right)_{n\times n}
    =\int_{\mathbb{R}^n}\Phi(y)A^T(y)\cdot\left(\partial_{ij} f\left(A(y)x\right)\right)_{n\times n}\cdot A(y)\mathrm{d}y
    \ \text{ for}\ f\in W^{2,1}.
  \end{equation}
\end{theorem}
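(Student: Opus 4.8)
The plan is to establish the boundedness and the derivative formula \ref{derivative of Hausdorff operator in main theorem, matrix} simultaneously, first for smooth $f$ and then by a density argument. The key pointwise fact is the chain rule: for fixed $y$ with $\det A(y)\neq 0$ and $f\in C_c^\infty(\mathbb{R}^n)$, since the $m$-th component of $A(y)x$ is $\sum_j a_{mj}x_j$, one computes
\[
\frac{\partial}{\partial x_j}\big(f(A(y)x)\big)=\sum_{i=1}^n a_{ij}(\partial_i f)(A(y)x)=\Big[\Big(\sum_{i=1}^n a_{ij}\frac{\partial}{\partial x_i}\Big)f\Big](A(y)x).
\]
The first-order operators $\sum_i a_{ij}\partial_i$ have coefficients independent of $x$, so for distinct $j$ they commute; iterating this identity and inducting on $|\alpha|$ yields the pointwise formula $D^{\alpha}\big(f(A(y)x)\big)=\big[\prod_{j=1}^n(\sum_{i=1}^n a_{ij}\partial_{x_i})^{\alpha_j}\big](f)(A(y)x)$ for every multi-index with $|\alpha|\le k$.

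The second step is the domination estimate that drives everything. Expanding the product above, $D^{\alpha}(f(A(y)x))$ is a finite sum of terms, each a product of $|\alpha|$ entries $a_{ij}$ times some $(D^{\beta}f)(A(y)x)$ with $|\beta|=|\alpha|$; since $|a_{ij}|\le\|A(y)\|$ this gives $|D^{\alpha}(f(A(y)x))|\lesssim\|A(y)\|^{|\alpha|}\sum_{|\beta|=|\alpha|}|(D^{\beta}f)(A(y)x)|$. Multiplying by $\Phi(y)$, integrating in $x$, applying Minkowski's integral inequality, and changing variables $u=A(y)x$ — legitimate because $A(y)$ is invertible on $\mathrm{supp}\,\Phi$ as $A\in\mathscr{A}$ — the $x$-integral contributes a factor $|\det A(y)|^{-1}\|D^{\beta}f\|_{L^1}$. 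Summing over $|\alpha|\le k$ and using $\|A(y)\|^{|\alpha|}\le 1+\|A(y)\|^k$, I obtain
\[
\|H_{\Phi,A}f\|_{W^{k,1}}\lesssim\|f\|_{W^{k,1}}\int_{\mathbb{R}^n}\Phi(y)\,|\det A(y)|^{-1}\big(1+\|A(y)\|^{k}\big)\,\mathrm{d}y,
\]
which is finite by \ref{condition of main theorem, matrix}; this proves boundedness on $W^{k,1}$.

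To justify \ref{derivative of Hausdorff operator in main theorem, matrix} — the interchange of $D^{\alpha}$ with the integral — note that for $f\in C_c^\infty$ the integrable domination of the second step permits differentiation under the integral sign, so combining it with the chain-rule identity of the first step gives the formula classically. For general $f\in W^{k,1}$ I would choose $f_m\in C_c^\infty$ with $f_m\to f$ in $W^{k,1}$ and pass to the limit: the boundedness just established forces $H_{\Phi,A}f_m\to H_{\Phi,A}f$ in $W^{k,1}$, so both sides of the identity converge and it persists in the weak sense. The two displayed special cases then follow by rewriting $\sum_i a_{ij}\partial_i$ in matrix notation, namely $\nabla f\cdot A(y)$ for the gradient ($k=1$) and $A^T(y)(\partial_{ij}f)A(y)$ for the Hessian ($k=2$).

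The main obstacle is the transition from classical to weak derivatives: the chain-rule identity is a priori only a pointwise statement for smooth $f$, whereas $f\in W^{k,1}$ possesses merely weak derivatives, so the interchange of $D^{\alpha}$ with $\int\cdots\mathrm{d}y$ must be read distributionally and obtained through the approximation above rather than directly. What makes the limit rigorous is precisely the uniform $L^1$ domination of the second step, which survives $f_m\to f$; and the need to run the change of variables and Minkowski's inequality simultaneously for all orders $|\alpha|\le k$ is exactly what forces the combined weight $1+\|A(y)\|^{k}$ to appear in \ref{condition of main theorem, matrix}.
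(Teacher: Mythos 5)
Your overall architecture (pointwise chain rule for $x\mapsto f(A(y)x)$, Minkowski's inequality plus the substitution $u=A(y)x$ to produce the factor $|\det A(y)|^{-1}\|A(y)\|^{|\alpha|}$, and hence the bound $\|H_{\Phi,A}f\|_{W^{k,1}}\lesssim \|f\|_{W^{k,1}}\int|\det A(y)|^{-1}(1+\|A(y)\|^{k})\Phi(y)\,\mathrm{d}y$) matches the paper's, and the final estimate is identical. The structural difference is that the paper never approximates: it works with $f\in W^{k,1}$ directly, pairs $H_{\Phi,A}f$ against $D^{\alpha}\varphi$ for $\varphi\in\mathscr{D}$, verifies absolute convergence of the double integral, and uses Fubini together with the weak chain rule for the composition with the invertible linear map $A(y)$ to identify $D^{\alpha}(H_{\Phi,A}f)$; you go through $C_c^{\infty}$ first and then a density passage.

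There is, however, a genuine gap in your justification of the derivative formula even for smooth $f$. You assert that ``the integrable domination of the second step permits differentiation under the integral sign \dots classically.'' The domination you established is an $L^1(\mathrm{d}x)$ bound, obtained \emph{after} integrating in $x$ --- that integration is precisely where the factor $|\det A(y)|^{-1}$ enters. Classical differentiation under the integral sign requires a pointwise-in-$x$ dominating function of $y$ alone, of the form $\|A(y)\|^{|\alpha|}\Phi(y)\sup|D^{\beta}f|$, and hence requires $\int \|A(y)\|^{|\alpha|}\Phi(y)\,\mathrm{d}y<\infty$. That is \emph{not} implied by (\ref{condition of main theorem, matrix}): for $n\geq 2$ the matrix $A(y)$ may have arbitrarily large norm while $|\det A(y)|^{-1}(1+\|A(y)\|^{k})$ remains integrable against $\Phi$. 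The repair is to prove the identity distributionally, as the paper does: pair with $\varphi\in\mathscr{D}$, use the $L^1(\mathrm{d}x\,\mathrm{d}y)$ bound (which \emph{does} follow from the hypothesis, since the $x$-integration supplies $|\det A(y)|^{-1}$) to justify Fubini, and integrate by parts in $x$ for each fixed $y$. Once you argue this way the computation goes through directly for $f\in W^{k,1}$ and the density step becomes unnecessary; if you do keep it, note that your bound only shows $H_{\Phi,A}f_m$ is Cauchy in $W^{k,1}$, and identifying its limit with $H_{\Phi,A}f$ requires the separate $L^1$ bound (the $k=0$ case), which is available but should be stated.
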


\begin{theorem}\label{theorem, characterization, matrix}
  Let $\Phi$ be a nonnegative function on $\bbR^n$, and let $k\geq 0$ be an integer.
  Suppose that $A(y)\in\mathscr{A}$ with $|\det A(y)| > \eta \prod\limits_{j=1}^n\|\vec{A}_j(y)\|$ uniformly on $\mathrm{supp}\,\Phi$
  for some $\eta>0$.
  The following three statements are equivalent
  \bn
  \item $H_{\Phi,A}$ is bounded on $W^{k,1}(\mathbb{R}^n)$;
  \item $H_{\Phi,A}f\in W^{k,1}$ if $f\in W^{k,1}$;
  \item $\int_{\mathbb{R}^n}\left|\det A(y)\right|^{-1} \left(1+\|A(y)\|^{k} \right) \Phi(y)\mathrm{d}y<\infty$.
  \en
\end{theorem}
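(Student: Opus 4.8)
The plan is to prove the cycle $(3)\Rightarrow(1)\Rightarrow(2)\Rightarrow(3)$. The implication $(3)\Rightarrow(1)$ is exactly Theorem~\ref{theorem, boundedness, matrix}, and $(1)\Rightarrow(2)$ is immediate, since a bounded operator on $W^{k,1}$ in particular maps $W^{k,1}$ into itself. All the work lies in the necessity direction $(2)\Rightarrow(3)$, and the two obstacles flagged in the introduction appear precisely here. I would first reduce to the case of nonnegative matrices: writing $A=\Lambda P Q$ with $P\in\mathscr{P}$ and setting $(T_\Lambda f)(v)=f(\Lambda v)$ and $(S_Q g)(x)=g(Qx)$, one checks $H_{\Phi,A}f=S_Q H_{\Phi,P}(T_\Lambda f)$. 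As $T_\Lambda$ and $S_Q$ are isomorphisms of $W^{k,1}$, and as $|\det A|$, $\|A\|$, $\prod_j\|\vec A_j\|$ are each comparable (constants depending on $\Lambda,Q,n$) to the corresponding quantities for $P$, both statements $(2)$ and $(3)$ are equivalent for $A$ and for $P$. Hence I may assume $a_{ij}(y)\ge 0$ on $\mathrm{supp}\,\Phi$; then $A(y)$ maps the positive orthant $E:=[0,\infty)^n$ into itself and each column sum $c_i(y):=\sum_l a_{li}(y)=\|\vec A_i(y)\|_1$ is positive.

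Next I would isolate the zeroth-order part. Taking any fixed $0\le f_0\in C_c^\infty$ with $\|f_0\|_{L^1}>0$, Tonelli's theorem (all integrands nonnegative, so no a priori integrability is needed) gives $\|H_{\Phi,A}f_0\|_{L^1}=\|f_0\|_{L^1}\int_{\mathbb{R}^n}|\det A(y)|^{-1}\Phi(y)\,dy$, and since $H_{\Phi,A}f_0\in L^1$ by $(2)$ this forces $\int_{\mathbb{R}^n}|\det A|^{-1}\Phi<\infty$. This finite quantity is exactly what later licenses Fubini at one fixed test function, which is how I intend to circumvent the first difficulty of the introduction. For the top-order part I would use the single test function $f(u)=(-1)^k\prod_{i=1}^n\psi(u_i)$, where $\psi\in C^\infty(\mathbb{R})$ equals $e^{-t}$ for $t\ge 0$ and is supported in $[-1,\infty)$, so that $f\in W^{k,1}$. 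The virtue of this choice is that on $E$ one has the exact identity
\[
(\vec A_j\cdot\nabla)^k f(u)=\Big(\sum_i a_{ij}\Big)^k e^{-(u_1+\cdots+u_n)}=\|\vec A_j\|_1^{\,k}\,e^{-\sum_l u_l}\ge 0,
\]
so that $\partial_{x_j}^k\!\big[f(A(y)x)\big]$ is nonnegative for \emph{all} $x\in E$ and \emph{all} $y\in\mathrm{supp}\,\Phi$. This disposes of the second difficulty: rather than demanding a globally sign-definite top derivative (impossible in $W^{k,1}$), I only need the one-sided sign on the fixed, $y$-independent region $E$, which nonnegativity of $A$ guarantees.

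Fixing $j$ and writing $g_j:=\partial_{x_j}^k H_{\Phi,A}f$, which lies in $L^1$ by $(2)$ applied to this one $f$ (no closed graph theorem required), I would compute the pairing $\int g_j\varphi$ for $\varphi\in C_c^\infty$ by transferring the derivatives onto $f$: Fubini (justified by the zeroth-order bound) followed by $k$ integrations by parts in $x_j$ gives
\[
\int_{\mathbb{R}^n} g_j\,\varphi\,dx=\int_{\mathbb{R}^n}\Phi(y)\int_{\mathbb{R}^n}\partial_{x_j}^k\big[f(A(y)x)\big]\,\varphi(x)\,dx\,dy.
\]
Choosing $\varphi=\varphi_R\uparrow\mathbf{1}_{\mathrm{int}\,E}$ with $0\le\varphi_R\le 1$, the right-hand integrand is nonnegative, so monotone convergence applies (no domination, hence no circular appeal to $(3)$), while the left-hand side stays bounded by $\|g_j\|_{L^1}$. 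Evaluating the resulting integral over $E$ explicitly, it factors into one-dimensional integrals $\int_0^\infty e^{-c_i x_i}\,dx_i=c_i^{-1}$, producing
\[
\int_{\mathbb{R}^n}\Phi(y)\,\|\vec A_j\|_1^{\,k}\prod_{i=1}^n\|\vec A_i\|_1^{-1}\,dy\le \|g_j\|_{L^1}<\infty.
\]

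Finally I would convert this into $(3)$. Since the entries are nonnegative, $\|\vec A_i\|_1\approx\|\vec A_i\|$; and here the nondegeneracy hypothesis enters decisively. Combining Hadamard's inequality $|\det A|\le\prod_i\|\vec A_i\|$ with the assumed reverse bound $|\det A|>\eta\prod_i\|\vec A_i\|$ shows $\prod_i\|\vec A_i\|_1\approx|\det A|$, with constants depending only on $n,\eta$. Therefore $\|\vec A_j\|_1^{\,k}\prod_i\|\vec A_i\|_1^{-1}\approx|\det A|^{-1}\|\vec A_j\|^{k}$, and summing the displayed estimate over $j=1,\dots,n$ together with $\sum_j\|\vec A_j\|^{k}\approx\|A\|^{k}$ yields $\int|\det A|^{-1}\|A\|^k\Phi<\infty$; adding the zeroth-order bound gives $(3)$. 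The step I expect to be the genuine obstacle is precisely the interchange of limits and integrals in the necessity argument without presupposing $(3)$; this is what the construction is engineered to overcome, through the one-sided sign of $f$ on $E$ (which replaces dominated by monotone convergence) and through first securing the zeroth-order bound so that Fubini is legitimate at each fixed $\varphi_R$.
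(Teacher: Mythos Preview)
Your proposal is correct and takes a genuinely different route from the paper's proof. Both arguments share the overall frame---the cycle $(3)\Rightarrow(1)\Rightarrow(2)\Rightarrow(3)$, the reduction to $A\in\mathscr{P}$ via $A=\Lambda P Q$, and the extraction of the zeroth-order bound $\int|\det A|^{-1}\Phi<\infty$ by Tonelli---but diverge from there.

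The paper proceeds by induction on the order $m=1,\dots,k$. At each step it uses the bound $\int|\det A|^{-1}\|A\|^{m-1}\Phi<\infty$ from the previous step to justify Fubini for the $(m-1)$st derivative, then tests against a shifted Gaussian $G_m(x)=(-1)^m\prod_l e^{-(x_l+a_m)^2}$, invoking a lemma on Hermite-type polynomials to guarantee that all $m$th-order derivatives of $G_m$ are positive on the open orthant $\Omega=(0,\infty)^n$. After a change of variables the integral lives on the cone $A(y)\Omega$, and the nondegeneracy hypothesis is used to bound the spherical measure $\sigma(A(y)\Omega\cap\bbS^{n-1})$ from below uniformly in $y$, so that the Gaussian integral over the cone is bounded below.

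Your argument is more direct on two counts. First, your test function $f(u)=(-1)^k\prod_i\psi(u_i)$ with $\psi=e^{-t}$ on $[0,\infty)$ gives the \emph{exact} identity $(\vec A_j\cdot\nabla)^k f(u)=\|\vec A_j\|_1^{\,k}e^{-\sum u_l}$ on the orthant, so no analogue of the Hermite lemma is needed, and the inner integral over $E$ factors and evaluates to $\prod_i\|\vec A_i\|_1^{-1}$ in closed form. Second, you bypass the induction entirely: you apply Fubini only at order zero (licensed by the zeroth-order bound) and then perform all $k$ integrations by parts pointwise in $y$, which requires only smoothness of $f$. Consequently you never need the intermediate bounds $\int|\det A|^{-1}\|A\|^{m-1}\Phi<\infty$. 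Finally, you deploy the nondegeneracy hypothesis differently: rather than controlling the solid angle of $A(y)\Omega$, you combine it with Hadamard's inequality to get $\prod_i\|\vec A_i\|_1\approx|\det A|$, converting your explicit integral directly into the form of~(3). This is arguably cleaner, though the paper's cone-angle interpretation is perhaps more geometric.

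One small caution, shared with the paper: in the reduction step you assert that $\prod_j\|\vec A_j\|$ is comparable to $\prod_j\|\vec P_j\|$, which is what transfers the nondegeneracy hypothesis from $A$ to $P$. This comparability is not automatic for arbitrary fixed $\Lambda,Q$ (one can write a diagonal $A$ as $\Lambda P Q$ with a $P$ whose columns are nearly parallel), and the paper glosses over the same point with a ``without loss of generality''. It does not affect the comparison, but you may want to tighten that sentence if you write this up.
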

\begin{remark}
  The condition $|\det A(y)| > \eta \prod_{j=1}^n\|\vec{\alpha}_j(y)\|$ in Theorem \ref{theorem, characterization, matrix} is in fact reflect the degree of linear independence for the column vector of the normalized matrix of $A(y)$ (see
  $\widetilde{A}(y)$ in the proof of Theorem \ref{theorem, characterization, matrix}),
  which ensure that an open cone can not be squashed too much.
\end{remark}

Especially, if the matrix $A(y)$ is the diagonal matrix
 $\text{diag}\{1/|y|, 1/|y|,\cdots, 1/|y|\}$, i.e., we have following boundedness characterization of $H_{\Phi}$.

\begin{corollary}\label{corollary, boundedness of Hausdorff operator on Sobolev space L_k^1}
  Let $\Phi$ be a nonnegative function on $\bbR^n$, and let $k\geq 0$ be an integer.
  Then
  $H_{\Phi}$ is bounded on $W^{k,1}(\mathbb{R}^n)$
  if and only if
  \begin{equation}\label{condition of main theorem}
    \int_{\mathbb{R}^n}|y|^n \left(1+|y|^{-k} \right) \Phi(y)\mathrm{d}y<\infty.
  \end{equation}
  Furthermore, if (\ref{condition of main theorem}) holds, we have
  \begin{equation}\label{derivative of Hausdorff operator in main theorem}
      \frac{\partial^{\alpha} H_{\Phi}f}{\partial x^{\alpha}}(x)
      =\int_{\mathbb{R}^n}|y|^{-|\alpha|}\Phi(y)\frac{\partial^{\alpha} f}{\partial x^{\alpha}}\left(\frac{x}{|y|}\right)\mathrm{d}y\ \text{ for}\ f\in W^{k,1}
    \end{equation}
for all $|\alpha|\leq k$.
\end{corollary}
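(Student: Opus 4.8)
The plan is to deduce this corollary directly from Theorems \ref{theorem, boundedness, matrix} and \ref{theorem, characterization, matrix} by specializing the matrix to $A(y)=\text{diag}\{1/|y|,\ldots,1/|y|\}$ and computing each geometric quantity entering those results. No new analytic machinery is needed; the work is purely in verifying that the hypotheses of Theorem \ref{theorem, characterization, matrix} hold for this $A(y)$ and that condition (3) collapses to (\ref{condition of main theorem}).

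First I would evaluate the two scalars appearing in condition (3) of Theorem \ref{theorem, characterization, matrix}. Since $A(y)$ is diagonal with every diagonal entry equal to $1/|y|$, we have $\det A(y)=|y|^{-n}$, hence $|\det A(y)|^{-1}=|y|^n$; and with the Frobenius norm fixed in the introduction, $\|A(y)\|=(\sum_{i=1}^n|y|^{-2})^{1/2}=\sqrt{n}\,|y|^{-1}$, so $\|A(y)\|^k=n^{k/2}|y|^{-k}$. Substituting into condition (3) turns it into $\int_{\bbR^n}|y|^n(1+n^{k/2}|y|^{-k})\Phi(y)\,dy<\infty$; since $n^{k/2}$ is a fixed positive constant, this is equivalent to (\ref{condition of main theorem}).

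Next I must check that $A(y)$ meets the structural hypotheses of Theorem \ref{theorem, characterization, matrix}. Membership in $\mathscr{A}$ is immediate: taking $\Lambda=Q=I$ and $P(y)=A(y)$ gives a factorization with $\det\Lambda,\det Q\neq0$, and the entries $1/|y|$ are nonnegative on $\mathrm{supp}\,\Phi$, so $P(y)\in\mathscr{P}$. For the nondegeneracy inequality, each column $\vec{A}_j(y)$ has the single nonzero entry $1/|y|$, so $\|\vec{A}_j(y)\|=|y|^{-1}$ and $\prod_{j=1}^n\|\vec{A}_j(y)\|=|y|^{-n}=|\det A(y)|$; thus $|\det A(y)|>\eta\prod_{j}\|\vec{A}_j(y)\|$ holds with any $\eta\in(0,1)$, say $\eta=1/2$. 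With the hypotheses verified, the equivalence (1)$\Leftrightarrow$(3) of Theorem \ref{theorem, characterization, matrix} yields the stated boundedness characterization.

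Finally, for the derivative formula I would specialize (\ref{derivative of Hausdorff operator in main theorem, matrix}). Because $a_{ij}=|y|^{-1}\delta_{ij}$, each first-order operator collapses to $\sum_{i}a_{ij}\partial_{x_i}=|y|^{-1}\partial_{x_j}$, so the product of directional derivatives becomes $\prod_{j=1}^n(|y|^{-1}\partial_{x_j})^{\alpha_j}=|y|^{-|\alpha|}\,\partial^{\alpha}/\partial x^{\alpha}$, and pulling the scalar $|y|^{-|\alpha|}$ out of the $x$-differentiation gives exactly (\ref{derivative of Hausdorff operator in main theorem}). I do not anticipate a genuine obstacle, as the corollary is a pure specialization of the matrix results; the only point demanding slight care is confirming that the constant $n^{k/2}$ produced by the Frobenius norm is harmless, which it is, because condition (3) is a finiteness statement invariant under multiplication by a positive constant.
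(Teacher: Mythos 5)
Your proposal is correct and follows exactly the route the paper intends: the corollary is stated as a direct specialization of Theorems \ref{theorem, boundedness, matrix} and \ref{theorem, characterization, matrix} to $A(y)=\mathrm{diag}\{1/|y|,\dots,1/|y|\}$, and your computations ($|\det A(y)|^{-1}=|y|^{n}$, $\|A(y)\|=\sqrt{n}\,|y|^{-1}$, $\prod_j\|\vec{A}_j(y)\|=|\det A(y)|$ so any $\eta<1$ works, and the collapse of the directional-derivative product to $|y|^{-|\alpha|}\partial^{\alpha}/\partial x^{\alpha}$) are all accurate. The paper gives no further argument beyond this specialization, so your proof matches it in both substance and level of detail.
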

Our article is organized as follows.
Section 2 is devoted to the proof of Theorem \ref{theorem, boundedness, matrix} and \ref{theorem, characterization, matrix}.
As applications, in Section 3 we get some bounded and unbounded properties
for Hardy operator and adjoint Hardy operator.

Throughout this paper, we will adopt the following notations.
$X\lesssim Y$ denotes the statement that $X\leq CY$, with a positive constant $C$ that may depend on $n, \,p$,
but it might be different from line to line.
The notation $X\sim Y$ means the statement $X\lesssim Y\lesssim X$.
We use $X\lesssim_{\lambda}Y$ to denote $X\leq C_{\lambda}Y$,
meaning that the implied constant $C_{\lambda}$ depends on the parameter $\lambda$.

\section{Proof of main theorem }
This section is devoted to the proofs of
Theorem \ref{theorem, boundedness, matrix} and \ref{theorem, characterization, matrix}.
We first recall the definition of weak derivative, which will be used frequently in our proofs.

\begin{definition}(Weak Derivative)
  Let
$D^{\al}
=\frac{\partial^m}{\partial x^{\alpha}}
=\frac{\partial^{\alpha_1+\alpha_2+\cdots+\alpha_n}}{\partial x_1^{\alpha_1}\partial x_2^{\alpha_2}\cdots\partial x_n^{\alpha_n}}$
be a differential monomial, whose total order is $m=|\alpha|=\alpha_1+\alpha_2+\cdots+\alpha_n$,
where $\al_j\in\mathbb{N}$.
Suppose we are given two locally integrable functions on $\mathbb{R}^n$, $f$ and $g$.
Then we say that $D^{\al}f=g$ (in the weak sense), if
\begin{equation}\label{Definition of weak derivatives}
  \int_{\mathbb{R}^n}f(x)D^{\al}\varphi(x)\mathrm{d}x
  =
  (-1)^{|\alpha|} \int_{\mathbb{R}^n}g(x)\varphi(x)\mathrm{d}x,\,\, \text{for all }\varphi\in\mathscr{D},
\end{equation}
where $\mathscr{D}$ is the space of indefinitely differential functions with compact support.
\end{definition}
Integration by parts shows us that this is indeed the relation that we would expect if $f$ had continuous partial derivatives up to order $|\alpha|$, and $D^{\al}f=g$ had the usual meaning.
It is of course not ture that every locally integrable function has partial derivatives in this sense.
However when the partial derivatives exist they are determined almost everywhere by the defining relation (\ref{Definition of weak derivatives}), see \cite{Stein_book_singular-integral}.


\begin{proof}[Proof of Theorem \ref{theorem, boundedness, matrix}]
By (\ref{condition of main theorem, matrix}) , for any  multi-index $\alpha$ with $|\alpha|\leq k$,
$f\in W^{k,1}$ and $\varphi\in \mathscr{D}$, we have
\be
\begin{split}
  \int_{\mathbb{R}^n}\int_{\mathbb{R}^n}|\Phi(y)f\left(A(y)x\right) D^{\al}\varphi(x)| \mathrm{d}x\mathrm{d}y
  \leq
  \|D^{\al}\varphi\|_{L^{\infty}}\|f\|_{L^1}\int_{\bbR^n}|\det(A(y))|^{-1}\Phi(y)\mathrm{d}y<\infty,
\end{split}
\ee
and
\be
\begin{split}
 & \int_{\mathbb{R}^n}\int_{\mathbb{R}^n}
  \left|\Phi(y)\left[\prod_{j=1}^n\left(\sum_{i=1}^na_{ij}(y)\frac{\partial}{\partial x_i}\right)^{\al_j}\right](f)(A(y)x)
        \varphi(x)\right|\mathrm{d}x \mathrm{d}y
  \\
  \leq &
  \|\varphi\|_{L^{\infty}}\|f\|_{W^{k,1}} \int_{\bbR^n}|\det(A(y))|^{-1}\|A(y)\|^{|\al|}\Phi(y)\mathrm{d}y<\infty.
\end{split}
\ee
From the above two equalities we deduce that
\begin{equation}\label{for proof, 10}
  \begin{split}
    &\int_{\mathbb{R}^n}H_{\Phi, A}f D^{\al}\varphi(x)\mathrm{d}x
    =
    \int_{\mathbb{R}^n}\int_{\mathbb{R}^n}\Phi(y)f\left(A(y)x\right)\mathrm{d}y D^{\al}\varphi(x)\mathrm{d}x
    \\
    &=
    \int_{\mathbb{R}^n}\Phi(y)
    \int_{\mathbb{R}^n}
      f\left(A(y)x\right) D^{\al}\varphi(x) \mathrm{d}x \mathrm{d}y
    \\
    &=
        (-1)^{|\al|}
    \int_{\mathbb{R}^n}\Phi(y)
    \int_{\mathbb{R}^n}
      D^{\al}(f(A(y)x))
        \varphi(x)\mathrm{d}x \mathrm{d}y
    \\
    &=
    (-1)^{|\al|}
    \int_{\mathbb{R}^n}\Phi(y)
    \int_{\mathbb{R}^n}
      \left[\prod_{j=1}^n\left(\sum_{i=1}^na_{ij}(y)\frac{\partial}{\partial x_i}\right)^{\al_j}\right](f)(A(y)x)
        \varphi(x)\mathrm{d}x \mathrm{d}y
    \\
    &=
    (-1)^{|\al|}
    \int_{\mathbb{R}^n}
    \int_{\mathbb{R}^n}\Phi(y)
      \left[\prod_{j=1}^n\left(\sum_{i=1}^na_{ij}(y)\frac{\partial}{\partial x_i}\right)^{\al_j}\right](f)(A(y)x)\mathrm{d}y\varphi(x)\mathrm{d}x,
  \end{split}
\end{equation}
where we use the Fubini theorem in the second and last equalities.
This yields that
\begin{equation}\label{for proof, 11}
  D^{\al}(H_{\Phi, A}f)(x)
  =
  \int_{\mathbb{R}^n}\Phi(y)
      \left[\prod_{j=1}^n\left(\sum_{i=1}^na_{ij}(y)\frac{\partial}{\partial x_i}\right)^{\al_j}\right](f)(A(y)x)\mathrm{d}y
\end{equation}
in the weak sense.
Note that (\ref{for proof, 11}) is valid for all multi-index $\alpha$ with $|\al|\leq k$, we conclude that
\begin{equation*}
  \begin{split}
    &\|H_{\Phi, A}f\|_{W^{k,1}}
    =
    \sum_{|\alpha|\leq k}
    \left\|
    D^{\al}(H_{\Phi, A}f)\right\|_{L^1}
    \\
    &=
    \sum_{|\alpha|\leq k}
    \left\|
     \int_{\mathbb{R}^n}\Phi(y)
      \left[\prod_{j=1}^n\left(\sum_{i=1}^na_{ij}(y)\frac{\partial}{\partial x_i}\right)^{\al_j}\right](f)(A(y)\cdot)\mathrm{d}y
    \right\|_{L^1}
    \\
    &\leq
    \sum_{|\alpha|\leq k}
     \int_{\mathbb{R}^n}\Phi(y)
      \left\|\left[\prod_{j=1}^n\left(\sum_{i=1}^na_{ij}(y)\frac{\partial}{\partial x_i}\right)^{\al_j}\right](f)(A(y)\cdot)\right\|_{L^1}\mathrm{d}y
      \\
    &\leq
    \sum_{|\alpha|\leq k}
     \int_{\mathbb{R}^n}\Phi(y)|\det A(y)|^{-1}
      \left\|\left[\prod_{j=1}^n\left(\sum_{i=1}^na_{ij}(y)\frac{\partial}{\partial x_i}\right)^{\al_j}\right]f\right\|_{L^1}\mathrm{d}y
      \\
    &\leq
     \int_{\mathbb{R}^n}\Phi(y)|\det A(y)|^{-1}
      \sum_{|\alpha|\leq k}\left[\prod_{j=1}^n\left(\sum_{i=1}^na_{ij}(y)\right)^{\al_j}\right]\mathrm{d}y
      \|f\|_{W^{k,1}}
      \\
      &\lesssim
       \int_{\mathbb{R}^n}\left|\det A(y)\right|^{-1} \left(1+\|A(y)\|^{k} \right) \Phi(y)\mathrm{d}y\|f\|_{W^{k,1}}.
   \end{split}
\end{equation*}
\end{proof}

In order to give the proof of Theorem \ref{theorem, characterization, matrix},
we first give some basic properties of Gaussian function.
In fact, we will use Gaussian function to construct some useful functions with
nonnegative derivatives on our desired domain of $\bbR^n$.

\begin{lemma}[Derivatives of Gaussian function]\label{lemma, Derivatives of Gaussian function}
  Let $g(t)=\mathrm{e}^{-t^2}$ for $t\in \mathbb{R}$, $g^{(m)}$ be the m-th derivative of $g$. Then
  \begin{enumerate}
    \item $g^{(m)}(t)=P_m(t)g(t)$ where $P_m(t)$ is a $m$-order polynomial with respect to the variable $t$;
    \item the sign of the highest order term of $P_m$ is equal to $(-1)^m$.
  \end{enumerate}
\end{lemma}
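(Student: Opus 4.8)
The plan is to prove both claims simultaneously by induction on $m$, since the two statements are naturally intertwined: the structure of $P_m$ as an $m$-th order polynomial feeds directly into the computation of the sign of its leading coefficient. First I would establish the base case $m=0$, where $g^{(0)}(t)=g(t)=e^{-t^2}$, so $P_0(t)\equiv 1$, which is a polynomial of degree $0$ whose leading (and only) coefficient is $1=(-1)^0$. This confirms both (1) and (2) at the start.

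For the inductive step, I would assume $g^{(m)}(t)=P_m(t)g(t)$ with $\deg P_m = m$ and leading coefficient having sign $(-1)^m$, and then differentiate once more. The key computation uses $g'(t)=-2t\,g(t)$, so that
\begin{equation*}
  g^{(m+1)}(t)=\frac{\mathrm{d}}{\mathrm{d}t}\big(P_m(t)g(t)\big)
  =P_m'(t)g(t)+P_m(t)g'(t)
  =\big(P_m'(t)-2tP_m(t)\big)g(t).
\end{equation*}
Setting $P_{m+1}(t):=P_m'(t)-2tP_m(t)$ immediately gives the form in (1). To track the degree and leading term, I would observe that if $P_m(t)=c_m t^m+(\text{lower order})$ with $c_m\neq 0$, then $P_m'(t)$ has degree $m-1$ while $-2tP_m(t)$ has degree $m+1$ with leading term $-2c_m t^{m+1}$. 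Hence the $-2tP_m(t)$ term dominates: $P_{m+1}$ has degree exactly $m+1$ with leading coefficient $c_{m+1}=-2c_m$.

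The sign claim (2) then follows cleanly from the recursion $c_{m+1}=-2c_m$. Since $-2<0$, each step flips the sign, so $\operatorname{sign}(c_{m+1})=-\operatorname{sign}(c_m)=-(-1)^m=(-1)^{m+1}$, completing the induction. I do not anticipate a genuine obstacle here, as the argument is a routine induction; the only point requiring mild care is verifying that the degree-$(m-1)$ contribution from $P_m'$ cannot cancel or interfere with the leading degree-$(m+1)$ term coming from $-2tP_m$, which is automatic since they sit in different degrees. The main thing to state explicitly is that $c_m\neq 0$ throughout, guaranteed by $c_m=(-2)^m$ up to the sign bookkeeping, so the polynomial genuinely attains degree $m$ at every stage and the leading term never degenerates.
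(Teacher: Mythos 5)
Your proof is correct and takes essentially the same route as the paper: induction on $m$ using $g'(t)=-2t\,g(t)$ and the product rule, with the recursion $P_{m+1}=P_m'-2tP_m$ forcing the leading coefficient to satisfy $c_{m+1}=-2c_m$. The only cosmetic difference is that you start the induction at $m=0$ while the paper starts at $m=1$.
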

\begin{proof}
  Denote by $Q(m)$ the conclusion of this lemma.
  Since $g'(t)=-2t\mathrm{e}^{-t^2}$, $Q(1)$ is correct.

  Next, we assume $Q(l-1)$ holds.
  Write $P_{l-1}(t)$ by
  \begin{equation}
    P_{l-1}(t)=(-1)^{l-1}A_{l-1}t^{l-1}+A_{l-2}t^{l-2}+\cdots+A_0,
  \end{equation}
  where $A_{l-1}>0$, $A_j\in\mathbb{R}$ for $j=0,1,\cdots ,l-2$.
  Then by derivative formula of multiplication we can calculate $g^{(l)}$ by
  \begin{equation*}
    \begin{split}
      g^{(l)}(t)=&\left(g^{(l-1)}(t)\right)'=\left(P_{(l-1)}(t)g(t)\right)'=\left(P_{(l-1)}(t)\right)'g(t)+ P_{(l-1)}\left(g(t)\right)'
      \\
      =&
      [(-1)^{l-1}A_{l-1}(l-1)t^{l-2}+(l-2)A_{l-2}t^{l-3}+\cdots+A_1]g(t)
      \\
      &+
      [(-1)^{l-1}A_{l-1}t^{l-1}+A_{l-2}t^{l-2}+\cdots+A_0](-2t)g(t)
      \\
      =&
      [(-1)^{l}2A_{l-1}t^{l}+(-2A_{l-2})t^{l-1}+\cdots+A_1]g(t).
    \end{split}
  \end{equation*}
  Thus, $P_l(t)=(-1)^{l}2A_{l-1}t^{l}+(-2A_{l-2})t^{l-1}+\cdots+A_1$ is a $l$-order polynomial and the leading term is $(-1)^{l}2A_{l-1}t^{l}$.
  Noticing that $A_{l-1}$ is positive, we have the sign of the leading coefficient of $P_l$ is $(-1)^l$.

  By the mathematical induction, $Q(m)$ holds for all $m\in \mathbb{N}$.
\end{proof}

\begin{proof}[Proof of Theorem \ref{theorem, characterization, matrix}]
  Note that $(3)\Longrightarrow (1)$ follows by Theorem \ref{theorem, boundedness, matrix},
  and $(1)\Longrightarrow (2)$ is trivial, we only need to give the proof of $(2)\Longrightarrow (3)$.
  First, we give a reduction for the matrix $A$.
  Write $A(y)=\La P(y)Q$, where $\det\La$, $\det Q\neq 0$,   $P(y)\in \mathscr{P}$.
  For any $f\in \wk1$, let $F(x):=f(\La^{-1}x)$, then $f(x)=F(\La x)$.
  Using this notation, we write
  \be
  \begin{split}
  H_{\Phi,P}(f)(x)
  = &
  \int_{\bbR^n}\Phi(x)f(P(y)x)\mathrm{d}y
  \\
  = &
  \int_{\bbR^n}\Phi(x)F(\La P(y)QQ^{-1}x)\mathrm{d}y
  \\
  = &
  \int_{\bbR^n}\Phi(x)F(A(y)Q^{-1}x)\mathrm{d}y=H_{\Phi,A}(F)(Q^{-1}x).
  \end{split}
  \ee
Thus,
\be
\|H_{\Phi,P}f(\cdot)\|_{W^{k,1}}=\|H_{\Phi,A}(F)(Q^{-1}\cdot)\|_{W^{k,1}}\sim\|H_{\Phi,A}(F)(\cdot)\|_{W^{k,1}},
\ee
This and the following relation
\be
\|f(\cdot)\|_{W^{k,1}}= \|F(\La\cdot)\|_{W^{k,1}} \sim \|F\|_{W^{k,1}}
\ee
yield that
the conclusion (2) is valid for $H_{\Phi,P}$ if it is valid for $H_{\Phi,A}$.
  Without loss of generality, we assume $A\in \mathscr{P}$,
  and $H_{\Phi,A}$ satisfies (2) in the remainder of this proof.

  The proof of $k=0$ is trivial, we assume $k\geq 1$.
  By the definition of $W^{k,1}$, we have $W^{k,1}\subset L^1$.
  Take $f$ to be a nonzero Schwartz function with nonnegative value, then $f\in W^{k,1}$, and $H_{\Phi,A}f\in W^{k,1}\subset L^1$.
  Recall that $A(y)$ is invertible and $\Phi\geq 0$. The Tonelli theorem and  variable substitution yield that
  \begin{equation*}
    \begin{split}
      \|H_{\Phi,A}f\|_{L^1}
      &=\int_{\mathbb{R}^n} \int_{\mathbb{R}^n} \Phi(y) f\left(A(y)x\right) \mathrm{d}y \mathrm{d}x
      \\
      &=\int_{\mathbb{R}^n} \Phi(y)  \int_{\mathbb{R}^n} f\left(A(y)x\right) \mathrm{d}x \mathrm{d}y
      \\
      &=\|f\|_{L^1}\int_{\mathbb{R}^n} \left|\det A(y)\right|^{-1} \Phi(y) \mathrm{d}y<\infty.
    \end{split}
  \end{equation*}
From this we have
  \begin{equation}\label{condition 1 of Phi in L_k^1, matrix}
   \int_{\mathbb{R}^n}\left|\det A(y)\right|^{-1}\Phi(y)\mathrm{d}y<\infty.
  \end{equation}
  We get the first condition of $\Phi$. Next, we proceed to seeking the information of $\Phi$ from smoothness.
  \\
  \textbf{Step 1: first order derivative.}\\
  It follows from the the assumption (2) that $H_{\Phi, A}f \in W^{k,1}$ for any fixed $f\in W^{k,1}$.
  By the definition of $W^{k,1}$, the weak derivative of $H_{\Phi, A}f$ exists and belongs to $L^1$.
  Moreover, for every $\varphi\in\mathscr{D}$,
  \begin{equation}\label{derivative of Hausdorff operator for alpha=1, 1, matrix}
    \begin{split}
      \int_{\mathbb{R}^n}\frac{\partial H_{\Phi, A}f}{\partial x_j}(x)\varphi(x) \mathrm{d}x
      =-\int_{\mathbb{R}^n}H_{\Phi, A}f(x)\frac{\partial \varphi}{\partial x_j}(x)\mathrm{d}x
      =-\int_{\mathbb{R}^n} \int_{\mathbb{R}^n} \Phi(y) f\left(A(y)x\right) \mathrm{d}y \frac{\partial \varphi}{\partial x_j}(x)\mathrm{d}x,
    \end{split}
  \end{equation}
  for $1\leq j\leq n$.
  Combing (\ref{condition 1 of Phi in L_k^1, matrix}) with the fact that $\varphi\in\mathscr{D}$ and $f\in L_k^1\subset L^1$,
  we have
  \begin{equation*}
    \begin{split}
      \int_{\mathbb{R}^n} \int_{\mathbb{R}^n} \left|\Phi(y) f\left( A(y)x \right) \frac{\partial \varphi}{\partial x_j}(x)\right|dxdy
      \leq &
      \left\|\frac{\partial \varphi}{\partial x_j}\right\|_{L^{\infty}}
      \int_{\mathbb{R}^n}|\Phi(y)| \int_{\mathbb{R}^n} \left|f\left( A(y)x \right)\right|dxdy
      \\
      = &
      \left\|\frac{\partial \varphi}{\partial x_j}\right\|_{L^{\infty}}
      \int_{\mathbb{R}^n}\left|\det A(y)\right|^{-1}\Phi(y)\mathrm{d}y\cdot \|f\|_{L^1}<\infty.
    \end{split}
  \end{equation*}
  Hence, we can use the Fubini theorem in (\ref{derivative of Hausdorff operator for alpha=1, 1, matrix}) to deduce that
  \begin{equation}\label{derivative of Hausdorff operator for alpha=1, 2, matrix}
    \begin{split}
      \int_{\mathbb{R}^n}\frac{\partial H_{\Phi, A}f}{\partial x_j}(x)\varphi(x) \mathrm{d}x
      &=-\int_{\mathbb{R}^n} \Phi(y) \int_{\mathbb{R}^n} f\left( A(y)x \right)  \frac{\partial \varphi}{\partial x_j}(x)\mathrm{d}x \mathrm{d}y
      \\
      &
      =\int_{\mathbb{R}^n}\Phi(y)
         \int_{\mathbb{R}^n} \varphi(x)\cdot \sum_{i=1}^n a_{i j}(y)\frac{\partial f}{\partial x_{i}}\left( A(y)x \right)\mathrm{d}x \mathrm{d}y,
    \end{split}
  \end{equation}
  for $1\leq j\leq n$, where in the last inequality we use the definition of weak derivative again.

  Set $g(t):=\mathrm{e}^{-t^2}$, $t\in\bbR$ and $G_1(x):=-\prod\limits_{l=1}^{n}g(x_l+1/2)$, where $x_l$ is the $l$-th component of $x=(x_1,x_2,\cdots,x_n)\in\bbR^n$.
  Note that $G_1$ is a Schwartz function, and belongs to $W^{k,1}$.
  By a direction calculation, we obtain
  \ben\label{for proof, 14}
  \frac{\partial G_1}{\partial x_j}(x)
  =2\left(x_j+\frac{1}{2}\right)\prod\limits_{l=1}^{n}g\left(x_l+\frac{1}{2}\right)
  \geq \mathrm{e}^{-2|x|^2-\frac{n}{2}}>0,\ \ \ x\in \Om:=(0,\infty)^n.
  \een
  Denote $\Om_r:=(0,\infty)^n\cap B(0,r)$, where $B(0,r)$ is the ball with  radius $r$ centered at the origin.
  Especially, $B(0,1)$ is the unit ball centered at origin. Denote by $\mathbb{S}^{n-1}$ the sphere of $B(0,1)$.
  Set
  $$
  \tA:=
  \left\{
  \frac{\vec{A}_1}{n\|\vec{A}_1\|},
  \frac{\vec{A}_2}{n\|\vec{A}_2\|},
  \cdots,
  \frac{\vec{A}_n}{n\|\vec{A}_n\|}
  \right\}
  .$$
  A direct computation yields that $\|\tA\|_{op}\leq \|\tA\|\leq 1$.
  This implies that $\tA\Om_1\subset\Om_1$.
  Set $$\widetilde\Om_1:=\{x\in\Om_1, \text{there exists } \lambda>0 \text{ such that } \lambda x\in\tA\Om_1\}.$$
  We have $A(y)\Om=\tA\Om\subset \Om$.
  Recall that  $|\dA|> \eta \cdot \|\vec{A}_1\|\|\vec{A}_2\| \cdots \|\vec{A}_n\|$ uniformly for $y\in\mathrm{supp\,}\Phi$,
  we have $|\det \tA|> C_{\eta}(=\eta/n^n)$ uniformly for $y\in\mathrm{supp\,}\Phi$.
  Hence,
  \be
  |\tA\Om_1|=\int_{\bbR^n}\chi_{\Om_1}(\tA^{-1}x)\mathrm{d}x=|\det \tA|\int_{\bbR^n}\chi_{\Om_1}(x)\mathrm{d}x=|\det \tA |\cdot |\Om_1|> C_{\eta}.
  \ee
  This and the fact $\tA\Om_1 \subset \widetilde\Om_1=A(y)\Om\cap B_1$ imply that
  \be
  |A(y)\Om\cap B_1|=|\widetilde\Om_1|\geq |\tA\Om_1|> C_{\eta}.
  \ee
  By using the spherical coordinates, we write
  \be
  \begin{split}
  |\widetilde\Om_1|=
  \int_0^1\sigma(\widetilde\Om_1\cap \bbS^{n-1})r^{n-1}dr=\frac{\sigma(\widetilde\Om_1\cap \bbS^{n-1})}{n}.
  \end{split}
  \ee
  This and the fact $A(y)\Om\cap \bbS^{n-1}=\widetilde\Om_1\cap \bbS^{n-1}$ imply that
  \ben\label{for proof, 15}
  \sigma(A(y)\Om\cap \bbS^{n-1})=\sigma(\widetilde\Om_1\cap \bbS^{n-1})> C_{\eta}.
  \een
  Take $\{\varphi_{\ell}\}_{\ell=1}^{\infty}$ to be sequence of nonnegative $C_c^{\infty}$ functions supported in $\Omega$,
  satisfying $0\leq \varphi_i \leq\varphi_\ell\leq 1$ if $i<\ell$ for all $i,\ell\in \mathbb{Z}^+$,
  and $\lim\limits_{{\ell}\rightarrow \infty}\varphi_{\ell}=\chi_{\Om}$.

  It follows by (\ref{derivative of Hausdorff operator for alpha=1, 2, matrix}), (\ref{for proof, 14}), (\ref{for proof, 15})
  and the Lebesgue's monotone convergence theorem
  that for $1\leq j\leq n$,
    \begin{equation}\label{for proof, 2, matrix}
    \begin{split}
    &\lim_{l\rightarrow \infty}\int_{\mathbb{R}^n}\frac{\partial H_{\Phi, A}G_1}{\partial x_j}(x)\varphi_l(x) \mathrm{d}x
    \\
      &=
      \lim_{l\rightarrow \infty}
      \int_{\mathbb{R}^n}\Phi(y)
      \int_{\mathbb{R}^n} \varphi_l(x)\cdot \sum_{i=1}^n a_{i j}(y) \frac{\partial G_1}{\partial x_{i}}\left( A(y)x \right)\mathrm{d}x \mathrm{d}y
      \\
      &=
      \int_{\mathbb{R}^n} \Phi(y) \int_{\Om} \sum_{i=1}^n a_{i j}(y) \frac{\partial G_1}{\partial x_{i}}\left( A(y)x \right)\mathrm{d}x \mathrm{d}y
      \\
      &=
      \int_{\mathbb{R}^n} \Phi(y)\left|\det A(y)\right|^{-1}
      \int_{A(y)\Om} \sum_{i=1}^n a_{i j}(y) \frac{\partial G_1}{\partial x_{i}}( x )\mathrm{d}x\mathrm{d}y
      \\
      &\geq
      \int_{\mathbb{R}^n} \Phi(y)\left|\det A(y)\right|^{-1}
      \int_{A(y)\Om} \sum_{i=1}^n a_{i j}(y) 2\mathrm{e}^{-2|x|^2-\frac{n}{2}}\mathrm{d}x\mathrm{d}y
      \\
      &\sim
      \int_{\mathbb{R}^n} \Phi(y)\left|\det A(y)\right|^{-1}\sum_{i=1}^n a_{i j}(y)\int_0^{\infty}e^{-2r^2}r^{n-1}\sigma(A(y)\Om\cap \bbS^{n-1})\mathrm{d}r\mathrm{d}y
      \\
      &\gtrsim
      \int_{\mathbb{R}^n} \Phi(y)\left|\det A(y)\right|^{-1}\sum_{i=1}^n a_{i j}(y)\mathrm{d}y.
    \end{split}
  \end{equation}
  By the fact $H_{\Phi,A}G_1\in W^{k,1}$, we further obtain that for $1\leq j\leq n$,
  \be
  \begin{split}
  \int_{\mathbb{R}^n} \Phi(y)\left|\det A(y)\right|^{-1}\sum_{i=1}^n a_{i j}(y)\mathrm{d}y
  \lesssim &
  \lim_{l\rightarrow \infty}\int_{\mathbb{R}^n}\frac{\partial H_{\Phi, A}G_1}{\partial x_j}(x)\varphi_l(x) \mathrm{d}x
  \\
  \lesssim &
  \lim_{l\rightarrow \infty}\left\|\frac{\partial H_{\Phi, A}G_1}{\partial x_j}\right\|_{L^1}\cdot \|\varphi_l\|_{L^{\infty}}
  \\
  \lesssim &
  \lim_{l\rightarrow \infty}\left\|H_{\Phi, A}G_1\right\|_{W^{k,1}}\cdot \|\varphi_l\|_{L^{\infty}}.
  \end{split}
  \ee
This implies that
  \begin{equation}\label{for proof, 3, one order, all index, matrix}
  \begin{split}
    \int_{\mathbb{R}^n}\Phi(y)\left|\det A(y)\right|^{-1} \left(1+\|A(y)\|\right) \mathrm{d}y
    \lesssim
    \sum_{j=1}^n\int_{\mathbb{R}^n} \Phi(y)\left|\det A(y)\right|^{-1}\sum_{i=1}^n a_{i j}(y)\mathrm{d}y
    \lesssim 1.
  \end{split}
  \end{equation}
Moreover, by Theorem \ref{theorem, boundedness, matrix}, we have the following expression of weak derivative:
  \begin{equation*}
    \frac{\partial H_{\Phi,A}f}{\partial x_j}(x)
    =\int_{\mathbb{R}^n}\Phi(y)\cdot \sum_{i=1}^n a_{i j}(y) \frac{\partial f}{\partial x_{i}}\left( A(y)x \right)\mathrm{d}y,
    \ \ \ (j=1,2,\cdots,n).
  \end{equation*}
  \\
  \textbf{Step 2: high order derivative with $k\geq2$.}\\
  Let $2\leq m\leq k$ be an integer.
  For a multi-index $\al$ with $|\al|=m$, there exist $s\in \{1,2,\cdots,n\}$ and a multi-index $\b$ such that
  \be
  |\b|=m-1,\ \ \ \b_s=\al_s-1,\ \ \ \b_i=\al_i,\ (i\neq s), \text{ i.e. } D^{\al}=D^{\b}\cdot\frac{\partial}{\partial x_s}.
  \ee
  Assume by induction that
  \begin{equation}\label{for proof, 4, matrix}
      \int_{\mathbb{R}^n}\Phi(y)\left|\det A(y)\right|^{-1} \left(1+\|A(y)\|^{m-1}\right) \mathrm{d}y <\infty,
    \end{equation}
    and
    \begin{align}\label{for proof, 5, matrix}
      &D^{\b}(H_{\Phi, A}f)(x)
      =\int_{\bbR^n}\Phi(y)D^{\b}(f(A(y)x))\mathrm{d}x
      \nonumber
      \\
      &=\int_{\mathbb{R}^n}\Phi(y)\cdot
      \left[\prod_{j=1}^n\left(\sum_{i=1}^na_{ij}(y)\frac{\partial}{\partial x_i}\right)^{\b_j}\right](f)(A(y)x)
        \mathrm{d}y.
    \end{align}
   By the definition of weak derivative and (\ref{for proof, 5, matrix}), for $\varphi \in \mathscr{D}$ and $f\in W_{k,1}$, we have
   \begin{equation}\label{for proof, 6, matrix}
     \begin{split}
       &\int_{\mathbb{R}^n}D^{\alpha}(H_{\Phi, A}f)(x)\varphi(x)\mathrm{d}x
       =
       (-1)^m\int_{\mathbb{R}^n}H_{\Phi, A}f \cdot D^{\b}\frac{\partial\varphi}{\partial x_s}(x)\mathrm{d}x
       \\
       = &
       -\int_{\mathbb{R}^n}D^{\b}(H_{\Phi, A}f)(x)\frac{\partial\varphi}{\partial x_s}(x)\mathrm{d}x
       \\
       = &
       -\int_{\mathbb{R}^n}
       \int_{\mathbb{R}^n}\Phi(y)
          \left[\prod_{j=1}^n\left(\sum_{i=1}^na_{ij}(y)\frac{\partial}{\partial x_i}\right)^{\b_j}\right](f)(A(y)x)\mathrm{d}y
       \frac{\partial\varphi}{\partial x_s}(x)\mathrm{d}x.
     \end{split}
   \end{equation}
   It follows by the induction that
   \ben\label{for proof, 13}
   \begin{split}
     &\left\|\Phi(y)
          \left[\prod_{j=1}^n\left(\sum_{i=1}^na_{ij}(y)\frac{\partial}{\partial x_i}\right)^{\b_j}\right](f)(A(y)x)
          \frac{\partial\varphi}{\partial x_s}(x)
       \right\|_{L^1(\bbR^n\times \bbR^n)}
       \\
       \lesssim &
       \left\|\frac{\partial\varphi}{\partial x_s}\right\|_{L^{\infty}}
       \int_{\bbR^n} \Phi(y)
       \left[\prod_{j=1}^n\left(\sum_{i=1}^na_{ij}(y)\right)^{\b_j}\right]
       \sum_{|\g|=m-1}\|D^{\g}(f)(A(y)x)\|_{L^1}\mathrm{d}y
       \\
       \lesssim &
       \int_{\bbR^n} \Phi(y)
       \|A(y)\|^{m-1}\sum_{|\g|=m-1}\|D^{\g}(f)(A(y)x)\|_{L^1}\mathrm{d}y
       \\
       \lesssim &
       \int_{\bbR^n}\Phi(y)|\det A(y)|^{-1}\|A(y)\|^{m-1}\mathrm{d}y<\infty.
   \end{split}
   \een
   Applying Fubini theorem to (\ref{for proof, 6, matrix}), we obtain that
   \begin{equation}\label{for proof, 7, matrix}
     \begin{split}
     &\int_{\mathbb{R}^n}D^{\alpha}(H_{\Phi, A}f)(x)\varphi(x)\mathrm{d}x
       \\
       = &
       -\int_{\mathbb{R}^n}\Phi(y)
       \int_{\mathbb{R}^n}
          \left[\prod_{j=1}^n\left(\sum_{i=1}^na_{i,j}(y)\frac{\partial}{\partial x_i}\right)^{\b_j}\right](f)(A(y)x)
       \frac{\partial\varphi}{\partial x_s}(x)\mathrm{d}x \mathrm{d}y
       \\
       = &
       \int_{\mathbb{R}^n}\Phi(y)
       \int_{\mathbb{R}^n}
          \left\{
          \left(\sum_{\ell=1}^n a_{\ell s}\frac{\partial}{\partial x_{\ell}}\right)\cdot
          \left[\prod_{j=1}^n\left(\sum_{i=1}^na_{ij}(y)\frac{\partial}{\partial x_i}\right)^{\b_j}\right]
          \right\}
          (f)(A(y)x)
       \varphi(x)\mathrm{d}x \mathrm{d}y,
       \\
       = &
       \int_{\mathbb{R}^n}\Phi(y)
       \int_{\mathbb{R}^n}
          \left[\prod_{j=1}^n\left(\sum_{i=1}^na_{ij}(y)\frac{\partial}{\partial x_i}\right)^{\al_j}\right]
          (f)(A(y)x)
       \varphi(x)\mathrm{d}x \mathrm{d}y,
     \end{split}
   \end{equation}
   where in the last equality we use the definition of weak derivative and the fact that $f\in W^{k,1}$.
  Set \[G_m(x)=(-1)^m\prod\limits_{l=1}^{n} g(x_l+a_m).\]
  Using Lemma \ref{lemma, Derivatives of Gaussian function},  for Gaussian function $g$ we have
  $g^{(l)}(t)=P_l(t)g(t)$, where $P_l(t)$ is a $l$-order polynomial of $t$.
  Denote $a_m$ the biggest one of all the positive roots of $(-1)^iP_i-1(i=1,2,\cdots,m)$ if exists, else $a_m=0$.
  Then Lemma \ref{lemma, Derivatives of Gaussian function} yields that for multi-index $\g$ with $|\g|=m$,
  \begin{equation*}
    D^{\g} G_m(x)
    =(-1)^m \prod_{l=1}^{n}P_{|\g_l|}(x_l+a_m)g(x_l+a_m)
    = \prod_{l=1}^{n}(-1)^{|\g_l|} P_{|\g_l|}(x_l+a_m)g(x_l+a_m),
  \end{equation*}
  where the sign of the leading term of $(-1)^{|\g_l|}P_{|\g_l|}$ is positive.
  By the choice of $a_m$, for every multi-index $\g$ with $|\g|=m$,
  we have
  \be
  (-1)^{|\g_l|} P_{|\g_l|}(x_l+a_m)\geq 1,\ \ \  \text{for}\ x\in \Omega:=(0,\infty)^n,
  \ee
  and
  $$D^{\g} G_m(x)\geq \prod_{l=1}^{n}g(x_l+a_m),\ \ \  \text{for}\ x\in \Omega:=(0,\infty)^n.$$
This implies that for $x\in\Om$,
\ben\label{for proof, 12}
\left(\prod_{j=1}^n\left(\sum_{i=1}^na_{ij}(y)\frac{\partial}{\partial x_i}\right)^{\al_j}\right)
          (G_m)(x)\geq
          \left(\prod_{j=1}^n\left(\sum_{i=1}^na_{ij}(y)\right)^{\al_j}\right)\prod_{l=1}^{n}g(x_l+a_m)>0.
\een
From this, (\ref{for proof, 7, matrix}) and the Lebesgue monotone convergence theorem, we have
\begin{equation*}
    \begin{split}
      &
      \lim_{l\rightarrow \infty}\int_{\mathbb{R}^n}D^{\alpha}(H_{\Phi, A}G_m)(x)\varphi_l(x)\mathrm{d}x
      \\
      =&
      \lim_{l\rightarrow \infty}
      \int_{\mathbb{R}^n}\Phi(y)
       \int_{\mathbb{R}^n}
          \left[\prod_{j=1}^n\left(\sum_{i=1}^na_{ij}(y)\frac{\partial}{\partial x_i}\right)^{\al_j}\right]
          (G_m)(A(y)x)
       \varphi_l(x)\mathrm{d}x \mathrm{d}y
       \\
       =&
       \int_{\mathbb{R}^n}\Phi(y)
       \int_{\Om}
          \left[\prod_{j=1}^n\left(\sum_{i=1}^na_{ij}(y)\frac{\partial}{\partial x_i}\right)^{\al_j}\right]
          (G_m)(A(y)x)\mathrm{d}x \mathrm{d}y
       \\
       =&
       \int_{\mathbb{R}^n}\Phi(y) \cdot |\det A(y)|^{-1}
       \int_{A(y)\Om}
          \left[\prod_{j=1}^n\left(\sum_{i=1}^na_{ij}(y)\frac{\partial}{\partial x_i}\right)^{\al_j}\right]
          (G_m)(x)\mathrm{d}x \mathrm{d}y,
       \end{split}
      \end{equation*}
where $\{\varphi_l\}_{l=1}^{\infty}$ is a sequence of functions as in the Step 1.
Next, by (\ref{for proof, 12}),
we further obtain that
\begin{align*}
       &
      \lim_{l\rightarrow \infty}\int_{\mathbb{R}^n}D^{\alpha}(H_{\Phi, A}G_m)(x)\varphi_l(x)\mathrm{d}x
              \\
       &\geq
       \int_{\mathbb{R}^n}\Phi(y) \cdot |\det A(y)|^{-1}
       \int_{A(y)\Om}
          \left(\prod_{j=1}^n\left(\sum_{i=1}^na_{ij}(y)\right)^{\al_j}\right)\prod_{l=1}^{n}g(x_l+a_m)\mathrm{d}x \mathrm{d}y
       \\
       &\geq\int_{\mathbb{R}^n}\Phi(y) \cdot |\det A(y)|^{-1}\left(\prod_{j=1}^n
         \left(\sum_{i=1}^na_{ij}(y)\right)^{\al_j}\right)\mathrm{d}y
       \inf_{y\in \bbR^n}\int_{A(y)\Om}\prod_{l=1}^{n}g(x_l+a_m)\mathrm{d}x.
  \end{align*}
Observe that
\be
g(x_l+a_m)=e^{-|x_l+a_m|^2}\geq e^{-2(|x_l|^2+|a_m|^2)}=e^{-2|a_m|^2}e^{-2|x_l|^2}.
\ee
Recall the following fact proved in Step 1
 \be
  \mathrm{\sigma}\left(\left(A(y)\Om\right)\cap \bbS^{n-1}\right) > C_{\eta} ,\ \ \ y\in \bbR^n.
 \ee
We obtain that for every $y\in \bbR^n$,
\be
\begin{split}
\int_{A(y)\Om}\prod_{l=1}^{n}g(x_l+a_m)\mathrm{d}x
\geq &
\int_{A(y)\Om}\prod_{l=1}^{n}e^{-2|a_m|^2}e^{-2|x_l|^2}\mathrm{d}x
\\
= &
e^{-2n|a_m|^2}\int_{A(y)\Om}\prod_{l=1}^{n}e^{-2|x_l|^2}\mathrm{d}x
\\
= &
e^{-2n|a_m|^2}\int_{A(y)\Om}e^{-2|x|^2}\mathrm{d}x
\\
= &
e^{-2n|a_m|^2}\int_0^{\infty}e^{-2r^2}r^{n-1}\mathrm{\sigma}\left(\left(A(y)\Om\right)\cap \bbS^{n-1}\right)dr
\gtrsim 1.
\end{split}
\ee
Combining with the above estimates, we obtain
\be
\begin{split}
\lim_{l\rightarrow \infty}\int_{\mathbb{R}^n}D^{\alpha}(H_{\Phi, A}G_m)(x)\varphi_l(x)\mathrm{d}x
       \gtrsim
       \int_{\mathbb{R}^n}\Phi(y) \cdot |\det A(y)|^{-1}
       \left[\prod_{j=1}^n
         \left(\sum_{i=1}^na_{ij}(y)\right)^{\al_j}\right]\mathrm{d}y.
\end{split}
\ee
Hence, combing with the fact $H_{\Phi, A}G_m\in W^{k,1}$, we have
\be
\begin{split}
  &\int_{\mathbb{R}^n}\Phi(y) \cdot |\det A(y)|^{-1}
        \left[\prod_{j=1}^n
         \left(\sum_{i=1}^na_{ij}(y)\right)^{\al_j}\right]\mathrm{d}y
         \\
         \lesssim &
         \lim_{l\rightarrow \infty}\int_{\mathbb{R}^n}|D^{\alpha}(H_{\Phi, A}G_m)(x)|\cdot |\varphi_l(x)|\mathrm{d}x
         \\
         \lesssim &
         \lim_{l\rightarrow \infty}\|D^{\alpha}(H_{\Phi, A}G_m)(x)\|_{L^1}\cdot \|\varphi_l(x)\|_{L^{\infty}}
         \lesssim
         \|H_{\Phi,A}G_m\|_{W^{k,1}}.
\end{split}
\ee
Note that the above inequality is valid for all $\al$ with $|\al|=m$. We obtain that
 \be
 \begin{split}
 &\int_{\mathbb{R}^n}\Phi(y) \cdot |\det A(y)|^{-1}
         (1+\|A\|^m))\mathrm{d}y
         \\
 \lesssim &
 \sum_{|\al|=m}
  \int_{\mathbb{R}^n}\Phi(y) \cdot |\det A(y)|^{-1}
       \left[\prod_{j=1}^n
         \left(\sum_{i=1}^na_{ij}(y)\right)^{\al_j}\right]\mathrm{d}y\lesssim 1.
 \end{split}
 \ee
   By induction, the conclusion is valid for all $m$ with $2\leq m\leq k$.
  This completes our proof.
\end{proof}

\section{Applications}
As we mentioned before, Hausdorff operators can be regarded as the generalization of some classical operators, such as Hardy operator $H$ and its adjoint operator $H^*$.
Thus, by choosing special $\Phi$, we can obtain the bounded and unbounded properties for special operator.
In one dimension, we take $\Psi(t)=\frac{\chi_{(1,\infty)}(t)}{t^2}$ and $\Psi^*(t)=\frac{\chi_{(0,1)}(t)}{t}$, we have
\begin{equation*}
  H_{\Psi}f(x)=Hf(x)=\frac{1}{x}\int_0^x f(t)dt
\end{equation*}
and
\begin{equation*}
  H_{\Psi^*}f(x)=H^*f(x)=\int_x^{\infty} \frac{f(t)}{t}dt
\end{equation*}
respectively.
\begin{proposition}
  Hardy operator $H$ is not bounded on $W^{k,1}(\mathbb{R})$ for all $k\geq0$.
  The adjoint operator of Hardy operator $H^*$ is bounded on on $L^1(\mathbb{R})$, but not bounded on $W^{k,1}(\mathbb{R})$ with $k\in\mathbb{Z}^+$.
\end{proposition}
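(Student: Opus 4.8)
The plan is to reduce everything to the boundedness criterion of Corollary~\ref{corollary, boundedness of Hausdorff operator on Sobolev space L_k^1}. Since $\Psi$ and $\Psi^{*}$ are nonnegative and, as recorded just above the statement, $H=H_{\Psi}$ and $H^{*}=H_{\Psi^{*}}$ are Hausdorff operators of the form $H_{\Phi}$, the corollary applies with $n=1$: the operator $H_{\Phi}$ is bounded on $W^{k,1}(\mathbb{R})$ if and only if
\begin{equation*}
  \int_{\mathbb{R}}|y|\left(1+|y|^{-k}\right)\Phi(y)\,\mathrm{d}y<\infty.
\end{equation*}
Thus the whole proof amounts to testing this single integral for $\Phi=\Psi$ and $\Phi=\Psi^{*}$ and reading off divergence or convergence.

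For the Hardy operator I would substitute $\Phi=\Psi(y)=\chi_{(1,\infty)}(y)/y^{2}$, so that the criterion integral becomes
\begin{equation*}
  \int_{1}^{\infty}\frac{1}{y}\left(1+y^{-k}\right)\,\mathrm{d}y
  =\int_{1}^{\infty}\left(\frac{1}{y}+\frac{1}{y^{k+1}}\right)\,\mathrm{d}y.
\end{equation*}
The first term is a divergent logarithmic integral for every $k\geq0$, so the criterion fails for all $k\geq0$; by Corollary~\ref{corollary, boundedness of Hausdorff operator on Sobolev space L_k^1}, $H$ is unbounded on $W^{k,1}(\mathbb{R})$ for all $k\geq0$, which in particular covers the endpoint $L^{1}=W^{0,1}$.

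For the adjoint operator, substituting $\Phi=\Psi^{*}(y)=\chi_{(0,1)}(y)/y$ turns the criterion integral into $\int_{0}^{1}\left(1+y^{-k}\right)\,\mathrm{d}y$. When $k=0$ this equals $2<\infty$, so $H^{*}$ is bounded on $L^{1}(\mathbb{R})$; equivalently, this is just the $L^{1}$ bound $\int_{\mathbb{R}}|y|\Psi^{*}(y)\,\mathrm{d}y=1$ that one reads directly off the proof of Theorem~\ref{theorem, boundedness, matrix} in the case $k=0$. When $k\geq1$, the term $\int_{0}^{1}y^{-k}\,\mathrm{d}y$ diverges at the origin, so the criterion fails and $H^{*}$ is unbounded on $W^{k,1}(\mathbb{R})$ for every $k\in\mathbb{Z}^{+}$.

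There is essentially no hard step here: once Corollary~\ref{corollary, boundedness of Hausdorff operator on Sobolev space L_k^1} is in hand, the result follows from two elementary one-dimensional integrals. The only points that demand a little care are the correct identification of the supports of $\Psi$ and $\Psi^{*}$ (so that the integrals are taken over $(1,\infty)$ and $(0,1)$ respectively) and the observation that the borderline integral $\int_{1}^{\infty}\mathrm{d}y/y$ already diverges, which is precisely what forces the Hardy operator to fail even at $k=0$ while its adjoint survives there and fails only for $k\geq1$.
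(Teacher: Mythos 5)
Your proposal is correct and follows essentially the same route as the paper: both reduce the statement to the criterion of Corollary~\ref{corollary, boundedness of Hausdorff operator on Sobolev space L_k^1} (noting $\Psi,\Psi^{*}\ge 0$) and then evaluate the integral $\int_{\mathbb{R}}|y|\left(1+|y|^{-k}\right)\Phi(y)\,\mathrm{d}y$ for $\Phi=\Psi$ and $\Phi=\Psi^{*}$, obtaining the same divergent logarithmic integral on $(1,\infty)$ and the same dichotomy between $\int_{0}^{1}1\,\mathrm{d}y<\infty$ and $\int_{0}^{1}y^{-k}\,\mathrm{d}y=\infty$ on $(0,1)$. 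No gaps.
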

\begin{proof}
  For the boundedness of Hardy operator on Sobolev spaces, by Corollary \ref{corollary, boundedness of Hausdorff operator on Sobolev space L_k^1}, we only need to check whether (\ref{condition of main theorem}) holds for $\Psi$.
  A direct computation yields that
  \begin{equation}
  \int_{\mathbb{R}} \Psi(t)\cdot |t|(1+|t|^{-k})dt
  =
    \int_{\mathbb{R}} \frac{\chi_{(1,\infty)}(t)}{t^2}\cdot |t|(1+|t|^{-k})dt
    \geq\int_1^{+\infty}\frac{1}{t}dt=+\infty.
  \end{equation}
  Thus, Hardy operator is not bounded on $W^{k,1}(\mathbb{R})$.

  To verify the boundedness of the adjoint operator of Hardy operator, we need to check (\ref{condition of main theorem}) for $\Psi^*$.
  More precisely, we have
  \begin{equation}
    \int_{\mathbb{R}} \Psi^*(t)\cdot |t|dt
  =\int_{\mathbb{R}} \frac{\chi_{(0,1)}(t)}{t}\cdot |t|dt=1
  \end{equation}
  and when $k$ is a positive integer
  \begin{equation}
  \int_{\mathbb{R}} \Psi^*(t)\cdot |t|(1+|t|^{-k})dt
  =
    \int_{\mathbb{R}} \frac{\chi_{(0,1)}(t)}{t}\cdot |t|(1+|t|^{-k})dt
    \geq\int_0^1 t^{-k}dt=+\infty.
  \end{equation}
  The desired conclusion follows by using Corollary \ref{corollary, boundedness of Hausdorff operator on Sobolev space L_k^1}.
\end{proof}

\begin{remark}
A classical result shows that Hardy-Littlewood maximal operator is not bounded on $L^1$.
However, the boundedness of first derivative of Hardy-Littlewood maximal operator is proved to be true at
certain endpoint spaces (see \cite{CMP_JFA_2017}).
Like the maximal operator, Hardy operator is not bounded on $L^1$,
however, one can verify its boundedness on $BV(\bbR)$.
\end{remark}

\subsection*{Acknowledgements} Zhao's work was partially supported by the National Natural Foundation of China (Nos. 11601456, 11771388)
and Guo's work was partially supported by the National Natural Foundation of China (Nos. 11701112, 11671414) and the China postdoctoral Science Foundation (No. 2017M612628).

\end{document}